\numberwithin{equation}{section}
\newcommand{\ca}[1]{{#1}}
\newtheorem{theorem}{Theorem}[section]
\newtheorem*{theorem*}{Theorem}
\newtheorem{lemma}[theorem]{Lemma}
\newtheorem{proposition}[theorem]{Proposition}
\newtheorem*{observation*}{Observation}
\theoremstyle{definition}{

}
\newcommand{\R}{\mathbb R}
\newcommand{\eps}{\varepsilon}
\renewcommand{\and}{\hbox{ {\rm and} }}
\newcommand{\E}{\mathbb{E}}
\renewcommand{\P}{\mathbf{P}}
\newcommand{\prob}{\P}
\def\capp{{\rm cap}}
\renewcommand{\epsilon}{\varepsilon}
\newcommand{\be}{\begin{eqnarray}}
\newcommand{\ee}{\end{eqnarray}}
\date{}
\par \textsc{School of Mathematical Sciences, Tel Aviv University} \par
\begin{document}
%\title{Geometric characterization of the Poisson boundary of transient planar graphs, via circle packings}
\title{Recurrence of multiply-ended planar triangulations}

\author{Ori Gurel-Gurevich \quad Asaf Nachmias \quad Juan Souto}
%\address{University of British Columbia}
%\email{\{origurel,asafnach,souto\}@math.ubc.ca}
%
%\author{Omer Angel}
%\address{Department of Mathematics \\
%University of British Columbia.}

%\urladdr{}
%
%\author{Martin Barlow}
%%\address{Martin Barlow\hfill\break
%%Department of Mathematics \\
%%University of British Columbia.}
%%\email{barlow@math.ubc.ca}
%%\urladdr{}
%
%\author{Ori Gurel-Gurevich}
%%\address{Ori Gurel-Gurevich\hfill\break
%%Department of Mathematics \\
%%University of British Columbia.}
%%\email{}
%%\urladdr{}
%
%\author{Asaf Nachmias \\ \\ University of British Columbia}
%%\address{Asaf Nachmias\hfill\break
%%Department of Mathematics \\
%%University of British Columbia.}
%%\email{}
%%\urladdr{}

\def\sX {{\mathbb X}}
\def\sE {{\mathcal E}}

\begin{abstract} In this note we show that a bounded degree planar triangulation is recurrent if and only if the set of accumulation points of some/any circle packing of it is polar (that is, planar Brownian motion avoids it with probability $1$).  This generalizes a theorem of He and Schramm \cite{HS} who proved it when the set of accumulation points is either empty or a Jordan curve, \ca{in which case the graph has one end}. We also show that this statement holds for any straight-line embedding with angles uniformly bounded away from $0$.
\end{abstract}

\maketitle

%\tableofcontents

%\section{Introduction}

\vspace{-1cm}

\section{Introduction}

A {\bf circle packing} of a planar graph $G=(V,E)$ is a set of circles $\{C_v\}_{v\in V}$ in the plane with disjoint interiors such that $C_v$ is tangent to $C_u$ if and only if $u$ is adjacent to $v$ in $G$. Koebe's famous circle packing theorem \cite{K36,St} asserts that any {\em finite} planar graph has a  circle packing; furthermore, when $G$ is a triangulation this packing is unique up to M\"obius transformations and reflections of the plane.

In their seminal paper, He and Schramm \cite{HS} studied the analogue theory for {\em infinite} planar graphs and found inspiring connections between parabolicity or hyperbolicity of the circle packing and recurrence or transience of the corresponding random walk on $G$.  Their theory is restricted to one-ended planar triangulations, that is, the removal of any finite set of vertices of $G$ and the edges adjacent to it results in some finite components and a single infinite component. The goal of this note is to generalize their theory to any bounded degree planar triangulation which can be multiply-ended.
% (include multiply-ended graphs.
% and to establish an equivalence criterion for recurrence.

Given a circle packing $P=\{C_v\}$, a point $z \in \R^2$ is called an {\bf accumulation point} of $P$ if any neighbourhood of $z$ intersects infinitely many circles of $P$. A measurable subset $A \subset \R^2$ is called {\bf polar} if planar Brownian motion $\{B_t\}_{t \geq 0}$ avoids it with probability one, that is, the event $B_t \not \in A$ for all $t >0$ occurs almost surely. The simple random walk on $G$ is said to be {\bf recurrent} if it returns to its starting vertex almost surely.
% for any starting point $B_0$
%Our proofs are simpler and in particular, the notion of {\em extremal length} that is central in \cite{HS} is not used.

\begin{theorem} \label{mainthm}
%Let $\{C_v\}$ be a circle packing of a bounded degree triangulation whose carrier is $\R^2 \setminus E$ for some closed set $E$. Then the simple random walk on the packing is recurrent if and only if $E$ is a polar set.
If $P=\{C_v\}$ is a circle packing of a bounded degree planar triangulation $G$, then the simple random walk on $G$ is recurrent if and only if the set of accumulation points of $P$ is a polar set.
\end{theorem}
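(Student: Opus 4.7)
The plan is to translate the question about simple random walks on $G$ into a question about planar Brownian motion via the circle packing $P$, using bounded degree and the Ring Lemma of Rodin-Sullivan as a quasi-conformal dictionary. The core chain of equivalences I aim to establish reads: SRW on $G$ is transient iff $G$ admits a finite-energy unit flow to infinity iff the carrier of $P$ admits a finite-energy unit flow from a base point to $A$ iff $A$ is non-polar. The first equivalence is the electrical network characterization of transience and the last is classical potential theory (non-polarity equals positive capacity, which equals the existence of a finite-energy equilibrium potential); the middle equivalence is the main new content and is obtained by transport via the packing.

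For \emph{non-polar $A$ implies transience of SRW}, non-polarity produces a finite-energy unit flow on $\R^2 \setminus A$ from some base point $z_0$ to $A$, obtained as the gradient of the equilibrium potential of $A$. Sample this continuous flow along the edges of $G$ by assigning to each edge $uv$ the net flux across the tangency point of $C_u$ and $C_v$. The Ring Lemma guarantees that each edge of $G$ corresponds to a region of comparable size in the carrier, so the resulting discrete flow has energy bounded by a constant multiple of the continuous one. This discrete flow has unit divergence at the vertex $v_0$ whose circle contains $z_0$ and sinks at infinity, forcing transience.

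For \emph{polar $A$ implies recurrence of SRW}, assume for contradiction that SRW is transient. Then $G$ admits a finite-energy unit flow $\theta$ from $v_0$ to infinity. Extend $\theta$ by linear interpolation across the packing to a piecewise-linear vector field $\tilde{\theta}$ on the carrier, with comparable energy by the Ring Lemma. The field $\tilde{\theta}$ is divergence-free away from $z_{v_0}$, and since the packing fills the plane up to $A$, its flux can exit the carrier only through $A$. But polarity of $A$ means $A$ has zero capacity, forbidding any non-trivial finite-energy flow from $z_{v_0}$ to $A$ — a contradiction.

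The main obstacle is the absence, in the multiply-ended setting, of a global conformal uniformization of the carrier onto a standard domain such as the plane or the disk, which was the essential tool in He-Schramm's one-ended treatment. The carrier may be a wildly multiply-connected planar domain and $A$ may be fractal with infinitely many components. The Ring Lemma gives local quasi-conformal control, but globalizing this to make the flow transport rigorous — with correct energy bounds near $A$, and with the right identification between ends of $G$ and components of $A$ — is where the principal technical work lies. A secondary subtlety is justifying that the piecewise-linear extension in the reverse direction actually exits only through $A$ and not through spurious "holes" in the carrier, which should follow from the triangulation hypothesis ensuring triangular interstices.
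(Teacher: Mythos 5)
Your plan is the Thomson-principle dual of the paper's argument: where you transport finite-energy unit flows between $G$ and the carrier, the paper transports potentials, comparing the discrete Dirichlet sum $\sum_{(x,y)\in E}|u(x)-u(y)|^2$ with $\int|\nabla\tilde u|^2\,d\mu$ and invoking the characterization of recurrence and of parabolicity by vanishing capacity. Both routes are viable and the Ring Lemma plays the same role in each (the paper in fact factors it out by first reducing to ``good'' straight-line embeddings). But you have mislocated the difficulty. No global uniformization of the carrier and no identification of ends of $G$ with components of $A$ is needed in either direction: the point of the paper's proof is that a purely local energy comparison, summed over faces, suffices, because the capacity criteria on both sides are insensitive to the topology of the domain. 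So the obstacle you describe in your last paragraph dissolves, while the steps you dispatch with ``comparable energy by the Ring Lemma'' are where the entire proof actually lives.

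Concretely, three things must be supplied. First, the squared flux of a continuous flow across a single cross-section dual to an edge is controlled by Cauchy--Schwarz only by a \emph{line} integral of the squared field, not by the energy; one must average over a family of parallel cross-sections (the paper's analogue is the random point $X$ in the definition $u(x)=\E\,\tilde u(X)$) to convert this into an area integral, and one needs the Sausage Lemma of \cite{ABGN} to keep the averaging region inside the carrier and to ensure bounded overlap. Second, in the unbounded case a discrete flow to infinity may escape through $\infty$ rather than through $A$, so the point at infinity must be adjoined to $A$ and disposed of via the recurrence of planar Brownian motion. Third, extending a discrete flow to a divergence-free field of comparable energy requires an explicit local construction at the vertices, which is genuinely more delicate than the paper's linear interpolation of a potential; this is a reason to prefer the potential formulation in at least that direction.
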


\subsection{Extensions} It will be useful to consider more general embeddings of planar graphs. Given a planar graph $G$, an {\bf embedding with straight lines} of it in the plane is a map \ca{taking} vertices to distinct points in $\R^2$ and edges to the straight lines between the corresponding vertices such that no two edges cross. A point $p \in \R^2$ is called an {\bf accumulation point} of an embedding of $G$ if any neighbourhood of it intersects infinitely many edges. An embedding with straight lines of a planar triangulation $G$ is {\bf good} if the angles between any two adjacent edges are uniformly bounded away from $0$. We note that this also implies that the angles are uniformly bounded away from $\pi$ and, by the sine law, the ratio between the lengths of two adjacent edges is uniformly bounded. \\
%
%
%
%Given $\eta>0$ we say that an embedding with straight lines of a planar triangulation $G$ is {\bf $\eta$-good} if every angles between two edges in a face is at least $\eta$.
%
%\begin{itemize}
%\item[(1)] Every angle between two edges in a face is at least $\eta$, and
%\item[(2)] The ratio between the lengths of two adjacent edges is at most $R$.
%\end{itemize}
%An embedding of $G$ with straight lines is called {\bf good} if it is $(R,\eta)$-good for some $R$ and $\eta$.
%

\noindent {\bf Theorem 1.1'} If $G$ \ca{is} a bounded degree planar triangulation with a good embedding, then the simple random walk on $G$ is recurrent if and only if the set of accumulation points of the embedding is a polar set. \\

Given a circle packing $P=\{C_v\}$ of a bounded degree planar triangulation $G$ we may obtain a straight line embedding by mapping vertex $v$ to the center of $C_v$. Rodin and Sullivan's ring lemma \cite{RS} asserts that the ratio between radii of tangent circles are bounded by a constant depending only on the degree. Hence, in each such triangle the altitude's length is comparable to the edge length and so the angles are uniformly bounded away from $0$. We deduce that the straight lines embedding obtained from $P$ is a good embedding and so Theorem 1.1' implies Theorem 1.1.

Our last extension of Theorem \ref{mainthm} handles planar graphs with bounded degree that are not necessarily triangulations but have a bounded number edges on each face. The embedding condition presented in the next theorem appeared in \cite{Chelkak}, see also \cite{ABGN}.
\begin{theorem}\label{fromabgn} Let $G$ be a bounded degree planar graph embedded in the plane with straight lines such that
\begin{itemize}
\item[(a)] For each face, all the inner angles are bounded away from $\pi$ uniformly (so in particular all faces are convex, there is no outer face, and each face has a bounded number of edges).
\item[(b)] The length of adjacent edges is comparable.
\end{itemize}
Then the simple random walk on $G$ is recurrent if and only if the set of accumulation points is a polar set.
\end{theorem}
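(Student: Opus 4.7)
The plan is to reduce Theorem~\ref{fromabgn} to Theorem~1.1' by triangulating each face of $G$ while preserving both the embedding structure and the recurrence type. Concretely, I would construct an auxiliary graph $G'$ on the same vertex set as $G$ by adding, inside each face $F$ of $G$, a system of non-crossing straight-line diagonals that triangulates $F$. Since (a) forces every face to be convex with a bounded number of edges, such a triangulation exists; any fixed rule (for instance, a fan from the first vertex of $F$ under some ordering) will do, and the union of $G$ with these diagonals forms a planar triangulation.

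Next I would verify that $G'$ has bounded degree and that the straight-line embedding inherited from $G$ is a good embedding. The degree bound is immediate from the bounded face size combined with the bounded degree of $G$, since each vertex lies on $O(1)$ faces and picks up $O(1)$ new edges in each of them. For the angle bound, the key observation is that by (a), (b), and the bounded face size, all pairwise distances between vertices of a common face are $\Theta(\ell)$, where $\ell$ is any edge length incident to that face. In particular every triangle of $G'$ lying in a face of $G$ has all three sides of comparable length, and hence all of its interior angles are bounded away from $0$ and from $\pi$. Consequently the angles between adjacent edges of $G'$ are uniformly bounded away from $0$, so $G'$ is a bounded-degree planar triangulation with a good embedding.

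I would then observe that the accumulation sets of $G'$ and $G$ coincide, since the new edges all lie inside bounded faces of $G$ and thus can produce no new accumulation points. By Theorem~1.1' applied to $G'$, the simple random walk on $G'$ is recurrent if and only if this common accumulation set is polar. To transfer the conclusion back to $G$, I would invoke the standard fact that recurrence of the simple random walk is invariant under rough isometries between bounded-degree graphs. The identity map on vertices is such a rough isometry from $G$ to $G'$: every edge of $G'$ is either an edge of $G$ or a diagonal of some face of $G$, and in the latter case it can be replaced by a path along the boundary of that face consisting of at most a bounded number of edges of $G$.

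The main technical point is really the verification that $G'$ is a good embedding, which comes down to the elementary comparability of all edges and diagonals within a single face under assumptions (a) and (b). The rest of the argument is a direct consequence of the bounded-degree and bounded-face-size hypotheses together with the rough-isometry invariance of recurrence; I do not anticipate any serious obstacles beyond keeping track of the constants in these comparability estimates.
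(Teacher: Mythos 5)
Your reduction is structurally parallel to the paper's but genuinely different in its details: the paper triangulates each face by inserting a \emph{new} vertex at the barycenter and joining it to all vertices of the face, verifies goodness via \cite[Lemma 2.1]{ABGN} plus a convexity argument for the barycenter, and then uses rough equivalence of $G$ and $G'$; you instead triangulate with diagonals on the same vertex set. Both routes end by applying Theorem 1.1' to $G'$ and transferring recurrence back via rough-isometry invariance \cite[Theorem 2.17]{LP}, and the identification of the accumulation sets is equally immediate either way; your version has the mild advantage that the rough isometry is the identity on a common vertex set.

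However, there is a genuine gap at precisely the step you single out as the main technical point. You argue that every fan triangle has three sides of comparable length ``and hence all of its interior angles are bounded away from $0$ and from $\pi$.'' That implication is false: the triangle with side lengths $1,1,2-\varepsilon$ has all side ratios bounded by $2$, yet two of its angles tend to $0$ and one tends to $\pi$. Comparability of the sides alone cannot rule out degenerate fan triangles. The step can be repaired, but convexity must be used a second time: each angle of a fan triangle $v_1v_iv_{i+1}$ is a sub-angle of the inner angle of the face at the corresponding vertex (the whole face, hence $v_1$, lies in the cone at $v_i$ spanned by the two face edges incident to $v_i$, and the fan angles at $v_1$ partition the inner angle there), so every fan-triangle angle is at most $\pi-\eta$; a triangle all of whose angles are at most $\pi-\eta$ has its largest angle in $[\pi/3,\pi-\eta]$, and then the law of sines combined with the bounded side ratios forces every angle to be at least some $c(\eta)>0$. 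Separately, the assertion that all pairwise distances between vertices of a common face are comparable to the edge length is true, but it is not a one-line consequence of (a) and (b): if two non-adjacent vertices were at distance $\varepsilon$ much smaller than the edge length, one must show that the sub-polygon cut off by the segment joining them is confined to a region of diameter comparable to $\varepsilon$, contradicting that it contains a full edge. This needs a short convexity argument that you should supply rather than take for granted, since the whole verification that $G'$ is a good embedding rests on it.
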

\begin{proof}
Construct a new planar triangulation $G'$ by \ca{adding a new vertex for each face of $G$ and connecting it to all the vertices of that face. Embed this graph by taking the embedding of $G$ and map each new vertex of $G'$ to the center of mass of the vertices of its corresponding face.}
% together with its straight lines embedding by adding a new vertex inside each face of $G$ and embedding it at the center of mass of the vertices of the face.
% and connect it with straight lines to the vertices of the face.
It is immediate that the set of accumulation points of $G$ and $G'$ is the same. Let us also observe that this is a good embedding of $G'$.  Indeed, \cite[Lemma 2.1]{ABGN} asserts that the angle between any two adjacent edges is bounded away uniformly from $0$. Hence, if $u',u,v,v'$ are consecutive vertices in a face of $G$ (it is possible that $u'=v'$ when the face is a triangle), then the distance between $u'$ and the infinite straight line connecting $u$ and $v$ is comparable to the edge length of $uv$, and likewise for $v'$. By convexity, the same holds for any other vertex in the face and therefore the distance between the centre of mass and each edge of the face is comparable to the edge length of the face. It follows \ca{that} the angles of triangles in $G'$ are bounded away from $0$.

 Furthermore, $G$ and $G'$ are immediately seen to be {\em roughly equivalent}, see \cite[Section 2.6]{LP}. Thus, Theorem 1.1' applied to $G'$ combined with \cite[Theorem 2.17]{LP} gives the required result.
\end{proof}

\noindent{\bf Remark.} As the proof above suggests, condition (a) in Theorem \ref{fromabgn} may be replaced by the condition that each face can be triangulated into a bounded number of triangles such that each triangle has angles bounded away from $0$. In particular, the faces need not be convex.

\subsection{About the proof} The proofs by He and Schramm \cite{HS} are based on the discrete analogue of the notion of {\em extremal length} and on the classical fact that the simple random walk on a graph is recurrent if and only if the edge extremal length from any finite set to infinity is infinite. It is assumed in \cite{HS} that the graph $G$ is one-ended (what they call {\em disk triangulations})
%so that the {\em carrier} of the packing is a simply connected set)
and the proofs do not seem to generalize to the multiply-ended case.

We present here a very short proof of Theorem \ref{mainthm}. In particular, we do not use the theory of extremal length or quasi-conformal maps, however, the heuristic leading to the proof relies on these notions, so let us briefly describe it.
% this heuristic here.

Let $G$ be a planar triangulation with a good embedding. Given $G$ we may construct a Riemannian surface $S(G)$ by considering each face as an equilateral triangle of unit length and gluing them together according to the combinatorics of the graph, as done in \cite{RG}. A curve in $S(G)$ has a length associated with it which is obtained by adding up the Euclidean length of the curve in each \ca{equilateral} triangle it passes through. Thus, the natural shortest-length metric on $S(G)$ is defined. It is easy to verify that this metric on $S(G)$ is roughly isometric to the graph-distance metric on $G$.

Next, consider the injective map $\varphi : S(G) \to \R^2 \setminus A$ where $A$ is the set of accumulation points of the packing obtained by mapping the vertices of $S(G)$ to the centers of circles of the packing $P=\{C_v\}$ and then linearly interpolating so that the map between each unilateral triangle of $S(G)$ is mapped affinely to the corresponding triangle formed by the circle centers of the face. Since $G$ has bounded degrees, Rodin and Sullivan's ring lemma \cite{RS} implies that the angles in each such triangle in $\R^2 \setminus A$ are bounded away from $0$ and $\pi$ uniformly, thus the map $\varphi$ is $K$-quasi-conformal (see \cite{Ahlfors}) with dilatation $K$ depending only on the bound on the degree.

It is a classical fact due to Kanai \cite{Kanai} (see also \cite[Theorem 2.17]{LP}) that parabolicity of a surface or recurrence of a graph (see precise definitions below) is invariant to rough isometries, so $G$ is recurrent if and only if $S(G)$ is parabolic. Another classical fact is that quasi-conformal maps change extremal length by at most a multiplicative factor depending only on $K$ (see \cite{Ahlfors}) and since $\varphi$ was quasi-conformal, $S(G)$ is parabolic if and only if $\R^2 \setminus A$ is parabolic, concluding the proof.

\section{The Proof}
\subsection{Recurrence and parabolicity of graphs and surfaces}
\subsubsection{Recurrence}
%A connected graph $G=(V,E)$ is called {\bf recurrent} if the simple random walk on it returns to its starting points infinitely often almost surely. There are many ways to check if a graph is recurrent and we make no attempt to describe all of them (see \cite[Chapter 2]{LP}) rather just the particular condition that we will use in this paper.
Given a finite set of vertices $K \subset V$, the {\em capacity of $K$} is defined by
$$ \capp(K) = \inf_{u} \sum_{e=(x,y)\in E} |u(x)-u(y)|^2 \, ,$$
where $u : V \to \R$ has finite support and satisfies $u(v)=1$ for any $v\in K$. It is easy to see that in any connected graph if the capacity of a finite set $K$ is positive, then the capacities of all finite sets are positive. The following is a classical result (see \cite[Exercise 2.13]{LP}).

 %and $u(v)=0$ for any vertex $v \not \in \Omega$. It is immediate that if $\Omega_1 \subset \Omega_2$, then $\capp(K,\Omega_2) \leq \capp(K, \Omega_1)$ and hence we may defined the {\bf capacity} of $K$, denoted $\capp(K)$, as the limit of $\capp(K, \Omega_n)$ where $\Omega_n$ are finite sets that exhaust $G$.

\begin{proposition} \label{capacityrecurrence} A connected graph $G=(V,E)$ is recurrent if and only if $\capp(K)=0$ for some/all finite sets $K\subset V$.
\end{proposition}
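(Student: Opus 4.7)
The plan is to identify the capacity $\capp(K)$ with the reciprocal of the effective resistance $\Reff(K \lra \infty)$ from $K$ to infinity, and then to invoke the classical characterization of recurrence via effective resistance.

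First I would fix an exhaustion $V_1 \subset V_2 \subset \cdots$ of $V$ by finite sets with $K \subset V_1$, and pass to the finite wired network $G_n$ obtained by identifying all vertices of $V \setminus V_n$ to a single vertex $\partial_n$. By the Dirichlet principle for finite networks, the infimum of $\sum_{e=(x,y)\in E} |u(x)-u(y)|^2$ over functions $u$ supported in $V_n$ with $u|_K \equiv 1$ is attained by the unique function that equals $1$ on $K$, vanishes on $V\setminus V_n$, and is discrete harmonic on $V_n \setminus K$, and its value is $1/\Reff(K \lra \partial_n)$. The class of admissible test functions for $\capp(K)$ is the increasing union of these finite-support classes, while $\Reff(K \lra \partial_n)$ increases monotonically to $\Reff(K \lra \infty)$ by definition. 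Taking the limit (with the convention $1/\infty = 0$) yields
$$\capp(K) = \frac{1}{\Reff(K \lra \infty)}.$$

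Next, I would appeal to the standard identity that the probability for simple random walk started at a vertex $v$ to never return to $v$ equals $(\deg(v)\,\Reff(v \lra \infty))^{-1}$; this is derived from the series/parallel interpretation of the Green function on $G_n$ and a passage to the limit. It immediately gives that $G$ is recurrent if and only if $\Reff(v \lra \infty) = \infty$, i.e.\ $\capp(\{v\}) = 0$, for some (equivalently any) single vertex $v$.

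Finally, the passage from ``some vertex $v$'' to ``every finite set $K$'' uses the fact that $\Reff$ is a metric on $V \cup \{\infty\}$. If $\capp(\{v\}) = 0$ and $K$ is any finite set, then connectedness of $G$ gives $\Reff(K \lra v) < \infty$ (it is bounded by the sum of edge resistances along a finite path), and the triangle inequality forces $\Reff(K \lra \infty) = \infty$, so $\capp(K) = 0$; conversely, monotonicity of $\capp$ under inclusion of constraint sets gives $\capp(\{v\}) \leq \capp(K)$ whenever $v \in K$. I do not anticipate any real obstacle in this argument; the only step requiring minor care is the monotone interchange linking the finite-support infimum in the definition of $\capp(K)$ with the limit along the exhaustion, which is routine.
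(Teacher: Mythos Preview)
Your argument is correct and is the standard textbook derivation (Dirichlet principle to identify $\capp(K)=1/\Reff(K\lra\infty)$, then the escape-probability formula, then passage from a single vertex to arbitrary finite $K$). The paper does not actually prove this proposition: it simply records it as a classical fact and cites \cite[Exercise~2.13]{LP}, so there is no ``paper's own proof'' to compare against beyond that reference.

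One small remark on your Step~3: the triangle inequality for $\Reff$ is usually stated for points, not sets, so strictly speaking you first get $\Reff(w\lra\infty)=\infty$ for every vertex $w$ (via $\Reff(v\lra\infty)\le\Reff(v\lra w)+\Reff(w\lra\infty)$), and then pass to a finite set $K=\{w_1,\dots,w_k\}$ by taking test functions $u_i$ for each $w_i$ with small energy and combining them, e.g.\ via $u=\max_i u_i$ (using $|\max(a,b)-\max(c,d)|\le\max(|a-c|,|b-d|)$ to bound the energy of the maximum by the sum of the energies). This is the ``minor care'' you anticipated and presents no difficulty.
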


\subsubsection{Parabolicity} Let $S \subset \R^2$ be a domain and consider Brownian motion on it with Dirichlet boundary conditions (that is, the motion \ca{is killed} upon hitting $\partial S$). We say that $S$ is parabolic if for any open set $U \subset S$ Brownian motion started at any point visits $U$ almost surely. For example, $\R^2$ minus an isolated set of points is parabolic, but the $\R^2 \setminus [0,1]$ is not.  As in the discrete case, there is a condition for parabolicity in terms of capacities. The {\em capacity} of a compact set $K$ is defined by
% Let $(M,g,\mu)$ be a Riemannian manifold where $g$ is a metric tensor and $\mu$ is a Riemannian volume measure. The manifold is said to be {\bf parabolic} if for any open set $U\subset M$ Brownian motion on $M$ starting at any point visits $U$ almost surely (for the construction of Brownian motion on Riemannian manifolds see \cite{Grigoryan}).
$$ \capp(K) = \inf _{u} \int_S | \bigtriangledown u |^2 d \mu \, ,$$
where the infimum is taken over all Lipschitz functions $u:S\to \R$ with compact support such that $u_{|K}=1$ and $0 \leq u \leq 1$. Again, it is easy to see that if the capacity of one compact set is positive, then all compact sets have positive capacities. The following is the continuous analogue of Proposition \ref{capacityrecurrence} (see \cite[Chapter 5]{Grigoryan} for the proof and a general discussion of Brownian motion on Riemannian manifolds).
\begin{proposition} \label{capacityparabolic} A domain $S$ is parabolic if and only if $\capp(K)=0$ for some/all compact sets $K\subset S$.
\end{proposition}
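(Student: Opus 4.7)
The plan is to identify both conditions with the probabilistic statement $h_K \equiv 1$, where $h_K(x) := \P_x[\tau_K < \infty]$ is the Brownian hitting probability of $K$. The ``some iff all'' clause will follow from the strong Markov property and the monotonicity $\capp(K) \le \capp(K')$ for $K \subset K'$, so without loss of generality I would fix a single compact $K$ with non-empty interior (the hypothesis ``$\capp(K)=0$ for some $K$'' is understood in this non-degenerate sense, since otherwise it is automatic for, say, single points in $\R^2$).

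First I would fix an exhaustion $\{D_n\}$ of $S$ by relatively compact open sets with smooth boundary, $K \subset D_1$, and consider the harmonic function $u_n$ on $D_n \setminus K$ with $u_n|_K = 1$ and $u_n|_{\partial D_n}=0$, extended by $1$ on $K$ and $0$ outside $\overline{D_n}$. Two observations are central: (i) by the probabilistic representation of harmonic measure, $u_n(x)=\P_x[\tau_K<\tau_{\partial D_n}]$, hence $u_n \uparrow h_K$ pointwise; and (ii) by the Dirichlet principle, $u_n$ minimizes $\int_S|\nabla v|^2$ over Lipschitz competitors $v$ with $v|_K=1$ and $\mathrm{supp}(v)\subset \overline{D_n}$, from which $e_n := \int_S |\nabla u_n|^2$ is non-increasing in $n$ and $\lim_n e_n = \capp(K)$ (a near-minimizer for $\capp(K)$ has compact support and is thus admissible for $D_n$ once $n$ is large).

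The proposition then reduces to showing $\lim_n e_n = 0 \Leftrightarrow u_n \uparrow 1$. For $(\Rightarrow)$: if $e_n \to 0$, a Poincar\'e inequality on a bounded neighborhood of $K$ (applicable since $u_n-1$ vanishes on the positive-measure set $K$) gives $u_n \to 1$ in $L^2_{\mathrm{loc}}$, and by monotonicity $u_n \uparrow 1$ pointwise. The converse $u_n \uparrow 1 \Rightarrow e_n \to 0$ will be the main obstacle, since one must upgrade pointwise monotone convergence to Dirichlet-energy convergence. The cleanest route is via classical potential theory: by Green's identity, $e_n = \int_{\partial K} \partial_\nu u_n\, d\sigma$ equals the total mass of the equilibrium measure $\rho_n$ of $K$ in $D_n$; the hypothesis $h_K \equiv 1$ forces the Green's function $G_{D_n}$ to blow up locally uniformly as $n \to \infty$, and then the normalization $u_n \equiv 1$ on $K$ (combined with the representation $u_n(x) = \int G_{D_n}(x,y)\, d\rho_n(y)$ for $x$ near $\partial K$) forces $\rho_n \to 0$. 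The full details are carried out in \cite[Chapter 5]{Grigoryan}.
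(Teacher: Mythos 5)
The paper offers no proof of this proposition---it is stated as a classical fact with a pointer to \cite{Grigoryan}*{Chapter 5}---and your sketch is precisely the standard potential-theoretic argument from that reference (exhaustion $D_n\uparrow S$, Dirichlet principle, monotone limits $u_n\uparrow h_K$ and $e_n\downarrow\capp(K)$, equilibrium measures and blow-up of the Green functions), so it takes essentially the same route and is correct modulo the routine regularity technicalities you explicitly defer. Your remark that the ``some'' direction must exclude degenerate (polar) compact sets, whose capacity vanishes in \emph{every} planar domain, is a necessary precisification that the paper's informal ``some/all'' phrasing glosses over.
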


%In this note we will always work with the manifold $M = \R^2 \setminus A$, where $A$ is the closed set of accumulation points of our embedding, with Dirichlet boundary conditions (that us, $M$ has no boundary). In this setting, parabolicity of $M$ is equivalent to saying that $A$ is avoided by planar Brownian motion almost surely, see \cite{Grigoryan}.

\subsection{Proof of Theorem 1.1'}

Let us first prepare some notation. Given a bounded degree triangulation $G$ and a good embedding of it, we abuse notation by referring to $v$ both as a vertex in $G$ and as a point in the plane given by the embedding and to $e$ both as an edge in $G$ and as corresponding straight line in the plane. We denote by $|e|$ the Euclidean length of $e$ and \ca{let $S\subset \R^2$ be the domain obtained by taking the union over all the faces of the closed polygon inscribed by the face. By definition $\partial S = A$,} where $A$ is the set of accumulation points. Lastly, since the embedding of $G$ is good, we denote by $\eta>0$ the lower bound on the angles between adjacent edges.

We begin by showing that if $G$ is recurrent, then $S$ is parabolic. Fix a vertex $\rho$ of $G$ and let $K=\{\rho\}$. Since $G$ is recurrent, by Proposition \ref{capacityrecurrence} we have that $\capp(K)=0$. Let $\eps>0$ be arbitrary and let $u:V(G) \to \R$ be a function with finite support, $u(\rho)=1$ and $\sum_{(x,y)\in E} |u(x)-u(y)|^2 < \eps$. Define $\tilde{u}:S\to \R$ to be the linear interpolation of $u$ on $S$, that is, if $\xi\in S$ is inside the triangle $v_1 v_2 v_3$ and $\xi = x v_1 + y v_2 + z v_3$ for $x,y,z \geq 0$ and $x+y+z=1$, then $\tilde{u}(\xi) = x u(v_1) + y u(v_2) + z u(v_3)$. Note that $\tilde{u}$ is Lipschitz and compactly supported.

We claim that there exists a constant $C=C(\eta) < \infty$ such that if $\xi$ is inside the triangle $v_1 v_2 v_3$, then
\be\label{gradbound} |\bigtriangledown \tilde{u} (\xi)| \leq C |v_1 v_2|^{-1} \ca{\max_{(i,j) \in \{1,2,3\}^2} |u(v_i) - u(v_j)|} \, .\ee

\noindent Indeed, assume without loss of generality that $\min_{i=1,2,3} u(v_i) = u(v_1)$ and by subtracting a $u(v_1)$ from $u$ we may further assume that $u(v_1)=0$ and that $u(v_2)$ and $u(v_3)$ are non-negative. For any $\xi$ inside the triangle $v_1 v_2 v_3$ we have that ${d \tilde{u} \over d y}(\xi) = |v_2 v_1|^{-1} u(v_2)$ and ${d \tilde{u} \over d z}(\xi) = |v_3 v_1|^{\ca{-1}} u(v_3)$. Since the edge lengths of the triangle $v_1 v_2 v_3$ are comparable, \eqref{gradbound} follows.

% angle between the lines $v_1 v_2$ and $v_1 v_3$ is at least $\eta$ and the lengths $|v_1 v_2|, |v_1 v_3|$ are up to a factor $R$ from each other, \eqref{gradbound} follows.

Since the embedding is good, $|v_1 v_2|^2$ is comparable to the area of the triangle $v_1 v_2 v_3$ (with a constant depending only on $\eta$), thus,
$$ \int _{v_1v_2v_3} |\bigtriangledown \tilde{u}|^2 d\mu \leq C \max_{(i,j) \in \{1,2,3\}^2} |u(v_i) - u(v_j)|^2 \, ,$$
and hence $\int _S |\bigtriangledown \tilde{u}|^2 d\mu \leq C \sum_{e=(x,y)\in E} |u(x)-u(y)|^2 \leq C\eps$ and we conclude by Proposition \ref{capacityparabolic} that $S$ is parabolic. \\

\begin{figure}
\begin{center}
\includegraphics[width=0.5\textwidth]{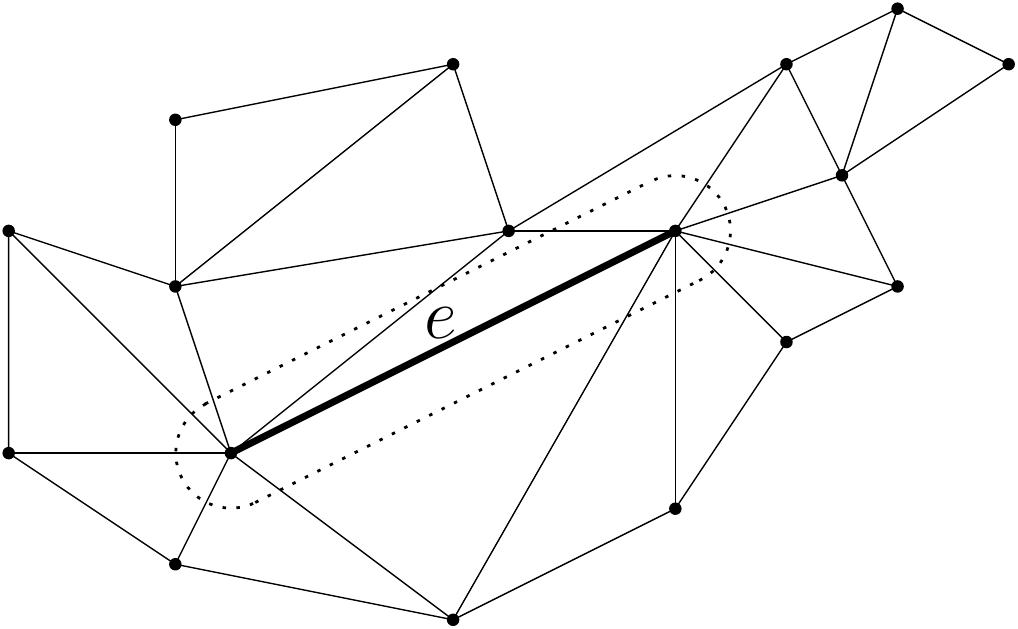}
\caption{The sausage lemma: no edge that is not adjacent to $e$ can intersect the marked ``sausage'' around $e$
  with width $c|e|$.}
\label{fig.sausage}
\end{center}
\end{figure}

To prove the converse, let us first recall an easy geometric estimate of \cite{ABGN}.
\begin{lemma}[Sausage lemma \cite{ABGN}] \label{sausage} There exists $c=c(\eta)>0$ such that if $e,f$ are non adjacent edges, then $d(e,f) \geq c|e|$, where $d(\cdot,\cdot)$ is Euclidean distance.
\end{lemma}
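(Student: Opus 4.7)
The plan is to establish two complementary Euclidean bounds and combine them. Write $e=uv$, and let $F_1 = uvw_1$ and $F_2 = uvw_2$ be the two triangles sharing $e$. Throughout, the key ingredient is that a good embedding forces the three sides of every face to have comparable lengths and every altitude in every face to be at least a constant (depending only on $\eta$) times any side length; this follows from the sine law, since every face angle lies in $[\eta,\pi-2\eta]$.

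The first step is an \emph{endpoint bound}: there is $\rho = \rho(\eta) > 0$ such that $d(u, f) \geq \rho |e|$ and $d(v, f) \geq \rho |e|$. Consider the closed star of $u$, i.e., the union of all faces containing $u$. By the previous paragraph, every neighbor of $u$ sits at distance at least $\rho|e|$ from $u$ (edge-length comparability around $u$), and every link edge of $u$ (edge between two consecutive neighbors) is at distance at least $\rho|e|$ from $u$ (as an altitude in the corresponding face). Hence the open disk $B(u, \rho|e|)$ lies in the interior of the star of $u$ and meets edges of the triangulation only in the relative interiors of the radial edges $uw_i$. If a point $p \in f$ lay in this disk, planarity (edges meet only at common vertices) would force either $p = u$ or $f$ to coincide with some $uw_i$; either way $f$ would contain $u$, contradicting non-adjacency. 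Thus $f \cap B(u, \rho|e|) = \emptyset$, and the symmetric argument handles $v$.

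The second step is a \emph{transverse bound}: for every $q \in e$ and $p \in f$, $|p-q| \geq (\sin\eta) \min(|qu|,|qv|)$. The four non-$e$ edges of $F_1$ and $F_2$ emanate from $u$ or $v$ at angle at least $\eta$ from $e$, so the perpendicular distance from $q$ to the supporting line of each such edge is at least $\sin\eta \cdot \min(|qu|,|qv|)$. Hence the ball $B(q, \sin\eta \cdot \min(|qu|,|qv|))$ is contained in $F_1 \cup F_2$, a region whose only triangulation edges are the six boundary sides of the two triangles, all adjacent to $e$. Thus the non-adjacent $f$ misses this ball.

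Finally I would combine: for any $q \in e$, either $\min(|qu|,|qv|) \geq \rho|e|/2$, and the transverse bound gives $d(q,f) \geq (\rho \sin\eta /2)|e|$; or (say) $|qu| < \rho|e|/2$, in which case the endpoint bound and the triangle inequality yield $|pq| \geq |pu| - |qu| \geq \rho|e|/2$ for every $p \in f$. Setting $c = \rho \sin\eta / 2$ completes the proof. The main obstacle is the endpoint bound: it requires translating the combinatorial star structure at $u$ into a Euclidean exclusion zone, and using planarity to prevent a non-adjacent edge from ``slipping into'' a neighborhood of $u$ along non-radial directions — everything else reduces to routine trigonometry with the $\eta$-bound.
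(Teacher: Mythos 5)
The paper does not actually prove this lemma: it is quoted verbatim from \cite{ABGN} (``let us first recall an easy geometric estimate of \cite{ABGN}''), so there is no in-paper argument to compare against. Your proof is correct and self-contained, and the endpoint-bound/transverse-bound decomposition is a clean way to organize it. Two small points are worth making explicit. First, in the endpoint bound, ``edge-length comparability around $u$'' iterates the one-face comparability $\deg(u)$ times; this still gives a constant depending only on $\eta$ because the face angles at $u$ are each at least $\eta$ and sum to $2\pi$, so $\deg(u)\le 2\pi/\eta$ (and in any case the standing bounded-degree hypothesis would cover this). Second, in the transverse bound, it is not quite true that the only triangulation edges meeting $F_1\cup F_2$ are the boundary sides of the two triangles: an edge emanating from $w_1$ or $w_2$ is non-adjacent to $e$ yet touches $F_1\cup F_2$ at that vertex. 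This does not break the argument, since $|qw_i|\ge d\bigl(q,\mathrm{line}(uw_i)\bigr)=|qu|\sin(\angle w_iuv)\ge \sin\eta\,\min(|qu|,|qv|)=r$, so the open ball $B(q,r)$ in fact avoids all four side lines and hence every edge other than $e$ itself; stating the containment this way closes the gap. With these clarifications the combination step is routine and the proof stands.
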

Now, assume $S$ is parabolic and let $K \subset S$ be a compact set so that $\capp(K)=0$ by Proposition \ref{capacityparabolic}. Let $\eps>0$ be arbitrary and let $\tilde{u}:S \to \R$ be a Lipschitz function compactly supported such that $u_{|K}=1$ and $\int_S |\bigtriangledown \tilde{u}(z)|^2 dz \leq \eps$. We define a function $u: V \to \R$ as follows. Given a vertex $x$, let $r_x$ be the minimal length of an edge adjacent to $x$ and let $X$ be a uniform random point in the Euclidean ball around $x$ of radius $c r_x$, where $c$ is the constance from Lemma \ref{sausage}. We define $u(x) = \E \tilde{u}(X)$. Let $(x,y)$ be an edge in $G$;  by Jensen's inequality
$$ (u(x) - u(y))^2 \leq \E \big [ (\tilde{u}(X) - \tilde{u}(Y))\ca{^2} \big ] \leq \E \Big [ \Big ( \int_{\Gamma_{XY}} |\bigtriangledown \tilde{u} (z) | dz \Big )^2 \Big ] \, ,$$
where for any two points $p,q$ in $\R^2$ we denote by $\Gamma_{pq}$ the straight line between them. Note that we used Lemma \ref{sausage} to assert that $\Gamma_{XY} \subset S$. We now use Cauchy-Schwartz and the fact that $|\Gamma_{XY}| \leq C r_x$ for some $C=C(\eta)<\infty$ to bound
$$ |u(x) - u(y)|^2 \leq C r_x \E \Big [ \int_{\Gamma_{XY}} |\bigtriangledown \tilde{u}(z)|^2 dz \Big ] \leq C r_x^2 \E \big [ |\bigtriangledown \tilde{u}(Z)|^2 \big ] \, ,$$
where, conditioned on $X,Y$ the point $Z$ is drawn uniformly on the line $\Gamma_{XY}$. We are left to bound the Radon-Nikodym derivative of the law of $Z$ with respect to Lebesgue measure. Indeed, let $T_{xy}$ be the union of the triangles touching $x$ or $y$ and let $f_Z$ be the density of $Z$. It is clear that $f_Z=0$ out of $T_{xy}$. Given any $z \in T_{xy}$ let $B(z,\delta)$ be the Euclidean ball of radius $\delta$ around $z$. We claim that for some $C=C(\eta)<\infty$
\be\label{toprove} \prob( Z \in B(z,\delta) ) \leq C  \delta^2 r_x^{-2} \, .\ee
Indeed, assume without loss of generality that $z$ is closer to $x$ than to $y$ and condition on the value of $Y$. Then for the event $Z \in B(z,\delta)$ to occur we must have that $X$ belongs to the cone emanating from $Y$ that is tangent to the circle $\partial B(z,\delta)$ --- this has probability at most $Cr^{-1}\delta$. Conditioned on this event, $Z$ must fall on an interval of length at most $2\delta$ on the line $\Gamma_{XY}$ --- this has probability at most $Cr^{-1} \delta$, showing \eqref{toprove}. Hence $f_Z(z) \leq C r_x^{-2}$ for any $z\in T_{xy}$. Since the area of $T_{xy}$ is proportional to $r_x^2$ we get that
$$ |u(x) - u(y)|^2 \leq C \int_{T_{xy}} |\bigtriangledown \tilde{u}(z)|^2 dz \, .$$
We sum this inequality over all edges $(x,y)$ in $G$ and since the degree of $G$ is bounded we integrate over each triangle a bounded number of times, whence
$$ \sum_{(x,y) \in E(G)} |u(x)-u(y)|^2 \leq C \int_S |\bigtriangledown \tilde{u}(z)|^2 dz \leq C\eps\, ,$$
showing that $G$ is recurrent by Proposition \ref{capacityrecurrence}. \qed

\section*{Acknowledgements} This research is supported by NSERC and NSF.

\begin{bibdiv}
\begin{biblist}

\bib{Ahlfors}{book}{
    AUTHOR = {Ahlfors, Lars V.},
     TITLE = {Lectures on quasiconformal mappings},
    SERIES = {University Lecture Series},
    VOLUME = {38},
   EDITION = {Second},
      NOTE = {With supplemental chapters by C. J. Earle, I. Kra, M.
              Shishikura and J. H. Hubbard},
 PUBLISHER = {American Mathematical Society, Providence, RI},
      YEAR = {2006},
     PAGES = {viii+162},
}

\bib{ABGN}{article}{
    AUTHOR = {Angel, Omer},
    AUTHOR = {Barlow, Martin},
    AUTHOR = {Gurel-Gurevich, Ori},
    AUTHOR = {Nachmias, Asaf},
    TITLE = {Boundaries of planar graphs, via circle packings},
    JOURNAL = {Ann. Prob., to appear},
    YEAR = {2015},
}

\bib{BS2}{article}{
    AUTHOR = {Benjamini, Itai},
    AUTHOR = {Schramm, Oded},
     TITLE = {Harmonic functions on planar and almost planar graphs and
              manifolds, via circle packings},
   JOURNAL = {Invent. Math.},
  %FJOURNAL = {Inventiones Mathematicae},
    VOLUME = {126},
      YEAR = {1996},
    NUMBER = {3},
     PAGES = {565--587},
}

\bib{Chelkak}{article}{
    AUTHOR = {Chelkak, Dmitry},
    TITLE = {Robust Discrete Complex Analysis: A Toolbox},
    Journal = {Ann. Prob., to appear},
}

\bib{Grigoryan}{article}{
    AUTHOR = {Grigor'yan, Alexander},
     TITLE = {Analytic and geometric background of recurrence and
              non-explosion of the {B}rownian motion on {R}iemannian
              manifolds},
   JOURNAL = {Bull. Amer. Math. Soc. (N.S.)},
  %FJOURNAL = {American Mathematical Society. Bulletin. New Series},
    VOLUME = {36},
      YEAR = {1999},
    NUMBER = {2},
     PAGES = {135--249},
      ISSN = {0273-0979},
}

\bib{HS}{article}{
    AUTHOR = {He, Zheng-Xu},
    AUTHOR = {Schramm, Oded},
     TITLE = {Hyperbolic and parabolic packings},
   JOURNAL = {Discrete Comput. Geom.},
%  FJOURNAL = {Discrete \& Computational Geometry. An International Journal
%              of Mathematics and Computer Science},
    VOLUME = {14},
      YEAR = {1995},
    NUMBER = {2},
     PAGES = {123--149},
}

\bib{RG}{article}{
    AUTHOR = {Gill, James T.},
    AUTHOR = {Rohde, Steffen},
     TITLE = {On the Riemann surface type of Random Planar Maps},
   JOURNAL = {Revista Mat. Iberoamericana},
   VOLUME = {29},
   YEAR = {2013},
   PAGES = {1071--1090},
}

\bib{Kanai}{article}{
    AUTHOR = {Kanai, Masahiko},
     TITLE = {Rough isometries, and combinatorial approximations of
              geometries of noncompact {R}iemannian manifolds},
   JOURNAL = {J. Math. Soc. Japan},
  FJOURNAL = {Journal of the Mathematical Society of Japan},
    VOLUME = {37},
      YEAR = {1985},
    NUMBER = {3},
     PAGES = {391--413},
}

\bib{K36}{book}{
	Author = {Koebe, Paul},
	Publisher = {Hirzel},
	Title = {Kontaktprobleme der konformen Abbildung},
	Year = {1936}
}

\bib{LP}{book}{
    author = {{R. Lyons with Y. Peres}},
    %author = {Lyons, Russel},
    %author = {Peres, Yuval},
    title = {Probability on Trees and Networks},
    publisher = {Cambridge University Press},
    date = {2008},
    note = {In preparation. Current version available at \texttt{http://mypage.iu.edu/\~{}rdlyons/prbtree/book.pdf}},
}

\bib{RS}{article}{
    AUTHOR = {Rodin, Burt},
    AUTHOR = {Sullivan, Dennis},
     TITLE = {The convergence of circle packings to the {R}iemann mapping},
   JOURNAL = {J. Differential Geom.},
  %FJOURNAL = {Journal of Differential Geometry},
    VOLUME = {26},
      YEAR = {1987},
    NUMBER = {2},
     PAGES = {349--360},
}

\bib{R}{article}{
    AUTHOR = {Rohde, Steffen},
     TITLE = {Oded Schramm: from circle packing to SLE},
   JOURNAL = {Ann. Probab.},
  %FJOURNAL = {The Annals of Probability},
    VOLUME = {39},
      YEAR = {2011},
    NUMBER = {5},
     PAGES = {1621--1667},
}
%
%\bib{Sc}{article}{
%    AUTHOR = {Schaeffer, Gilles},
%    TITLE  = {Conjugaison d'arbres et cartes combinatoires al\'eatoires},
%    JOURNAL= {PhD thesis, Universit\'e Bordeaux I, 1998}
%}

\bib{St}{book}{
    AUTHOR = {Stephenson, Kenneth},
     TITLE = {Introduction to circle packing},
      NOTE = {The theory of discrete analytic functions},
 PUBLISHER = {Cambridge University Press},
   ADDRESS = {Cambridge},
      YEAR = {2005},
     PAGES = {xii+356},
}

%
%\bib{T}{article}{
%    AUTHOR = {Tutte, W. T.},
%     TITLE = {A census of planar triangulations},
%   JOURNAL = {Canad. J. Math.},
%  FJOURNAL = {Canadian Journal of Mathematics. Journal Canadien de
%              Math\'ematiques},
%    VOLUME = {14},
%      YEAR = {1962},
%     PAGES = {21--38},
%}

\end{biblist}
\end{bibdiv}

%\end{thebibliography}
\end{document}